\documentclass[a4paper,11pt,reqno]{amsart}
\usepackage{amsmath}
\usepackage{geometry}
\usepackage[utf8]{inputenc}
\usepackage[T1]{fontenc}
\usepackage{latexsym,amsmath,amsfonts,amscd,amssymb,amsthm,mathrsfs}
\usepackage{pifont}
\usepackage{enumerate}
\usepackage[all]{xy}
\usepackage{color}
\usepackage{tikz}
\usepackage{lscape}
\usepackage{tabularx}
\usepackage{array}

\definecolor{cobalt}{RGB}{61,89,171}
\usepackage[colorlinks,citecolor=cobalt,linkcolor=cobalt,urlcolor=cobalt,pdfpagemode=UseNone]{hyperref}

\newcommand{\del}{\partial}

\newcommand{\Z}{\mathbb{Z}}
\newcommand{\R}{\mathbb{R}}

\newcommand{\C}{\mathbb{C}}
\newcommand{\Ker}{Ker}
\newcommand{\im}{Im}

\allowdisplaybreaks[1]

\theoremstyle{plain}
\newtheorem{theorem}{Theorem}[section]

\newtheorem{lemma}[theorem]{Lemma}

\theoremstyle{definition}
\newtheorem{definition}[theorem]{Definition}

\theoremstyle{remark}

\begin{document}
\title{Symplectic non-K\"ahler manifolds with and without the Hard Lefschetz Condition}
\author{Richard Hind}
\address[Richard Hind]{Department of Mathematics \\
University of Notre Dame\\
255 Hurley Bldg \\
Notre Dame, IN 46556}
\email{hind.1@nd.edu}

\author{Adriano Tomassini}
\address[Adriano Tomassini]{
Dipartimento di Scienze Matematiche, Fisiche e Informatiche\\
Unità di Mate\-matica e Informa\-tica\\
Università degli studi di Parma\\
Parco Area delle Scienze 53/A, 43124 Parma, Italy}
\email{adriano.tomassini@unipr.it}

\keywords{symplectic structure; Hard Lefschetz Condition; symplectic blowup; orbifold;  resolution}
\thanks{The first author is partially supported by the Simons Foundation. 
The second author is partially supported by the Project PRIN 2022 ``Real and Complex Manifolds: Geometry and Holomorphic Dynamics 2022AP8HZ9'' and by GNSAGA of INdAM}
\subjclass[2010]{53C15; 53D05}

\date{\today}
\begin{abstract}
In this paper we construct compact manifolds without K\"ahler structures that admit both a symplectic form satisfying the Hard Lefschetz Condition (HLC) and another symplectic form that does not. Our construction builds upon the orbifold introduced by Fern\'andez and Mu\~noz and its symplectic resolution studied by Cavalcanti, Fern\'andez, and Mu\~noz. By considering a one-parameter family of symplectic forms on the orbifold, we show that the corresponding resolved manifolds fail to satisfy the HLC for all parameters. However, after performing a suitable symplectic blowup along a union of tori, we obtain a family of symplectic manifolds for which the HLC holds for all non-zero parameters but fails at the central parameter. As a consequence, we exhibit a smooth manifold with no K\"ahler structure whose space of symplectic forms contains both HLC and non-HLC structures in the same connected component. This provides new examples of the subtle interplay between symplectic topology and the Hard Lefschetz property.
\end{abstract}
\maketitle

\section*{Introduction}
A compact $2n$-dimensional symplectic manifold $(X,\omega)$ is said to satisfy the \emph{Hard Lefschetz Condition} (HLC) if the maps
\begin{equation}\label{HLC}
[\omega]^k: H^{n-k}_{dR}(X) \longrightarrow H^{n+k}_{dR}(X)
\end{equation}
are isomorphisms for every $0 \leq k \leq n$. It is well known that if $(X,\omega,J)$ is a compact K\"ahler manifold, then $(X,\omega)$ satisfies the HLC (see e.g., \cite{GH}) and the de Rham complex $(\Omega^*(X), d)$ is a formal DGA in the sense of Sullivan (see \cite{DGMS}); consequently, any compact K\"ahler manifold provides an example of a symplectic manifold satisfying the HLC. Moreover, HLC symplectic manifolds share several cohomological properties with K\"ahler manifolds: the odd Betti numbers $b_{2k+1}(X)$ are even, the inequalities $b_k(X) \leq b_{k+2}(X)$ hold for $0 \leq k < n-1$, and the even Betti numbers satisfy $b_{2k}(X) > 0$, for $0\leq k\leq n$.

If $(X,\omega)$ is a $2n$-dimensional symplectic manifold, one can define a symplectic codifferential operator $d^\Lambda: \Omega^k(X) \to \Omega^{k-1}(X)$ using the symplectic star operator. A fundamental symplectic identity then holds:
\[
[d, \Lambda] = d^\Lambda,
\]
where $\Lambda$ denotes the symplectic adjoint of the Lefschetz operator $L(\alpha) = \omega \wedge \alpha$. In the symplectic setting, the triple $(\Omega^*(X), d, d^\Lambda)$ forms a \textit{differentiable Gerstenhaber–Batalin–Vilkovisky (dGBV) algebra}. This algebra is said to be \textit{integrable} (i.e., the $dd^\Lambda$-lemma holds) if and only if $(X,\omega)$ satisfies the HLC (see \cite{Yan, Cav2, Merkulov, Ma, Br, Mathieu}).
\vskip.1truecm\noindent 
On the other hand, families of compact symplectic manifolds with no K\"ahler structure that nonetheless satisfy the HLC can be obtained as compact quotients of connected, simply connected solvable Lie groups by lattices (see e.g., \cite{K2, K1, dBT1, AG1, LT}). In particular, Kasuya proved in \cite{K2} that if the action $\phi: \mathbb{R}^k \to \operatorname{Aut}(\mathbb{R}^m)$ is semisimple, then any symplectic solvmanifold $(\Gamma \backslash (\mathbb{R}^k \ltimes_\phi \mathbb{R}^m), \omega)$ satisfies the Hard Lefschetz condition for any symplectic form. Similar results appear in \cite{AG1}, \cite{LT} and \cite{AG2}. In these solvmanifold examples the HLC holds for \emph{every} symplectic form. 

The paragraphs above naturally raise the following question:
\vskip.1truecm
\begin{quote}
\emph{Do there exist compact $2n$-dimensional smooth manifolds that admit a symplectic form satisfying the HLC and another symplectic form that does not satisfy the HLC?}
\end{quote}
\vskip.2truecm\noindent
Cho, \cite{cho}, has constructed a simply connected compact K\"ahler manifold $(M,J,\omega)$ together with a symplectic form $\sigma$ on $M$ that does not satisfy the Hard Lefschetz Condition, even though $\sigma$ is symplectically deformation equivalent to the K\"ahler form $\omega$.

The aim of this paper is to provide compact manifolds with no K\"ahler structure that admit both a symplectic structure satisfying the HLC and a symplectic structure that does not. This is the non-K\"ahler counterpart of Cho's result: whereas the manifold in \cite{cho} is K\"ahler and the non-HLC form is a deformation of the K\"ahler form, the manifold $\hat{M}$ constructed here admits \emph{no} K\"ahler structure at all; in fact it is non-formal, carrying a nontrivial Massey product (see Section \ref{HLCblowup}). Still, its symplectic cone contains both HLC and non-HLC classes within a single connected component.

\bigskip

Given a symplectic orbifold $(M,\omega)$, Cavalcanti, Fern\'andez, and Mu\~noz, in \cite{CFM}, introduced the notion of a \emph{symplectic resolution} $(\tilde{M}, \tilde{\omega})$ of $(M,\omega)$. They showed by construction that symplectic resolutions always exist, \cite[Theorem 3.3]{CFM}, and further, that the resolutions $(\tilde{M}, \tilde{\omega})$ constructed satisfy the HLC if and only if $(M,\omega)$ does (see \cite[Theorem 3.9]{CFM}).

In \cite[Example 5.1]{CFM} a symplectic structure is defined on the orbifold $M$ constructed by Fern\'andez and Mu\~noz in \cite{FM}, and shown not to satisfy the HLC. Thus the resolution $(\tilde{M}, \tilde{\omega})$ also does \emph{not} satisfy the Hard Lefschetz Condition (see \cite[Example 5.2]{CFM}). However, by performing a symplectic blowup of $(\tilde{M}, \tilde{\omega})$ along the union of three suitably chosen symplectic tori $\bigcup_{i=1}^3 \mathbb{T}^2_i$, the article \cite{CFM} obtained a new symplectic manifold $(\hat{M}, \hat{\omega})$ that does satisfy the HLC. We emphasize that in \cite{CFM} the HLC and non-HLC phenomena are realized by two different manifolds: the resolution fails the HLC, while the blowup, equipped with its own symplectic form, satisfies it. By contrast, our construction produces a \emph{single} smooth manifold $\hat{M}$ carrying a \emph{one-parameter family} $\{\hat{\omega}_t\}$ of symplectic forms for $t \in (-\infty, 2)$, all lying in the same connected component of the symplectic cone, for which the HLC holds if and only if $t\neq 0$. It is this passage from ``two constructions, one form each'' to ``one manifold, a connected family of forms'' that yields Theorem \ref{intro2}.

We will define another symplectic structure $\omega'$ on the orbifold $M$ above, and prove the following results (see Theorems \ref{sec4main} and \ref{main}).
\begin{theorem} 
Let
$$
\omega_t=t\omega + (1-t)\omega',\qquad  t\in \R.
$$
Then $\{\omega_t\}_{ t\in \R}$ is a family of symplectic structures on the complex orbifold $M$ interpolating between $\omega'$ and $\omega$, and has the following properties:
\begin{enumerate}
    \item for all $t$, the symplectic resolution $(\tilde{M}, \tilde{\omega}_t)$ of $(M,\omega_t)$ does not satisfy the Hard Lefschetz Condition; \\
    \item for all $t\neq 0,2$, the symplectic blowup $(\hat{M},\hat{\omega}_t)$ of $\tilde{M}$ along the union of three suitably chosen symplectic tori $S=\bigcup_{i=1}^3 \mathbb{T}^2_i$ satisfies the Hard Lefschetz Condition; setting $t=0$, the symplectic blowup $(\hat{M},\hat{\omega'})$ along $S$ 
    exists, but
    does not satisfy the Hard Lefschetz Condition.
\end{enumerate}
\end{theorem}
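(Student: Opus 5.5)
We will work throughout with the Fernández–Muñoz orbifold $M$ from \cite{FM}: it is a quotient $M=(N\times \mathbb{T}^2)/\Z_3$, with $N$ a compact complex nilmanifold (the Iwasawa-type Heisenberg quotient of \cite{FM}), so $\dim_\R M=8$. Its cohomology is computed as the $\Z_3$-invariant part of the Nomizu (left-invariant) complex of $N\times\mathbb{T}^2$. We take the invariant forms $\omega$ of \cite[Example 5.1]{CFM} and $\omega'$ to be explicit invariant $2$-forms in a basis $\eta_1,\dots,\eta_8$. Since $\omega_t=t\omega+(1-t)\omega'$ is closed and invariant, nondegeneracy reduces to a pointwise algebraic computation. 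We compute $\omega_t^4$ as $p(t)\,\eta_{1\cdots 8}$, with $p$ a polynomial in $t$, and check that $p(t)\neq 0$ for the stated parameter range. From the abstract, the range is $t<2$, and the blowup will require $t\neq 2$.

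\textbf{Part (1).} By \cite[Theorem 3.9]{CFM}, $(\tilde M,\tilde\omega_t)$ satisfies the HLC if and only if $(M,\omega_t)$ does. It therefore suffices to exhibit, for every $t$, a failure of HLC on the orbifold. We work in the finite-dimensional invariant complex and write down the matrices of
\[
[\omega_t]^k: H^{4-k}(M)\to H^{4+k}(M).
\]
The natural candidate is $k=1$ on $H^3$, or $k=3$ on $H^1$. We expect a class $[\alpha]\in H^3(M)$ such that $\omega\wedge\alpha$ and $\omega'\wedge\alpha$ are both exact. The class $\alpha$ comes from the Heisenberg structure, and this is the same class that kills HLC in \cite[Example 5.1]{CFM}. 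The key step is to check that $\omega'$ also annihilates this class, so that every convex combination $\omega_t$ does too.

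\textbf{Part (2).} The three tori $\mathbb{T}^2_i$ of \cite{CFM} must be symplectic for $\tilde\omega_t$. This holds if $\omega_t$ restricted to each torus is $c_i(t)\,dA$ with $c_i(t)\neq 0$. We expect $c_i(t)$ to be affine in $t$ and to vanish at $t=2$, which explains the exclusion of $t=2$. The symplectic blowup $\hat M$ along $S$ then exists. Its cohomology is given by the McDuff-type formula
\[
H^*(\hat M)=H^*(\tilde M)\oplus \bigoplus_{i=1}^3 H^*(\mathbb{T}^2_i)\otimes\langle e_i,e_i^2\rangle ,
\]
where the $e_i$ come from the exceptional divisors, which are $\mathbb{CP}^2$-bundles over the tori. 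The class $[\hat\omega_t]$ equals $[\tilde\omega_t]$ minus small multiples of the $e_i$.

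We then follow \cite{CFM}'s computation, now tracking $t$. For $k=1$, the kernel of $[\tilde\omega_t]$ on $H^3$ is spanned by some class $a$. After the blowup, the new cup products $a\cdot e_i$, equivalently the restrictions $a|_{\mathbb{T}^2_i}$ paired with the fiber classes, make $[\hat\omega_t]\cdot a\neq 0$. The other degrees are handled by the block structure: Lefschetz holds on each exceptional summand, and it holds on the rest of $H^*(\tilde M)$. The $t$-dependence enters through the pairing determinants, which will be polynomials in $t$ of the shape $t\cdot q(t)$. We show they vanish only at $t=0$ on the range $t<2$. At $t=0$, $\omega'$ restricted to the relevant cycles kills the correcting pairing, so the obstruction $a$ survives and HLC fails.

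\textbf{Main obstacle.} The hardest step is the degree-$3\to5$ (and $2\to6$) Lefschetz computation on $\hat M$. It requires correctly identifying the cup products between classes pulled back from $\tilde M$ and the exceptional classes, since these depend on the normal data of the tori and on the resolution. We would first compute everything on the $\Z_3$-invariant model. We would then verify nondegeneracy by computing the determinant of the Lefschetz pairing $\int a\wedge b\wedge\hat\omega_t$ on $H^3$ as an explicit function of $t$.
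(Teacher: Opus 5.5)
\textbf{There is a genuine gap: your plan puts the failure of the HLC on $\tilde M$, and its cure on $\hat M$, in odd degree, but for this orbifold $H^1(M)=H^3(M)=0$.}

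The $\rho$-weights are $1$ on $\eta^1,\eta^2,\eta^4,\bar\eta^3$ and $2$ on $\eta^3,\bar\eta^1,\bar\eta^2,\bar\eta^4$. Hence there are no invariant $1$-forms, and every closed invariant $3$-form is exact, e.g.\ $\eta^{124}=-d\eta^{34}$ and $\eta^{12\bar3}-\eta^{\bar1\bar2 3}=-d\eta^{3\bar3}$. So the class $[\alpha]\in H^3(M)$ you hope to find does not exist. The levels $k=1,3$ are harmless, on $\tilde M$ by \cite[Theorem 3.9]{CFM} and on $\hat M$ by \cite[Theorem 2.1]{Cav1}. The whole question lives in $[\omega_t]^2\colon H^2(M)\to H^6(M)$, with $\dim H^2(M)=13$.

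Your Part (1) mechanism also does not carry over to degree $2$. The map $[\omega_t]^2=t^2[\omega]^2+2t(1-t)[\omega][\omega']+(1-t)^2[\omega']^2$ is quadratic in $t$, so a class killed by both $[\omega]^2$ and $[\omega']^2$ need not be killed by $[\omega_t]^2$. In any case there is no common kernel: $\ker[\omega']^2=\R\langle[e^{12}],[e^{17}+e^{28}],[e^{27}-e^{18}]\rangle$ and $\ker[\omega]^2=\R\langle[e^{34}],[e^{37}+e^{48}],[e^{47}-e^{38}]\rangle$ intersect trivially. What is needed is that $[\omega_t]^2$ has rank $10$ on $H^2(M)$ for \emph{every} $t$. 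The paper gets this from the matrix of $\int_M\omega_t^2\wedge\alpha\wedge\beta$. The kernel moves with $t$ and is spanned by
\begin{itemize}
\item $a_1(t)=[(1-t)^2e^{12}+t^2e^{34}+t(1-t)(e^{14}-e^{23})]$,
\item $a_2(t)=[(1-t)(e^{17}+e^{28})+t(e^{37}+e^{48})]$,
\item $a_3(t)=[(1-t)(e^{27}-e^{18})+t(e^{47}-e^{38})]$.
\end{itemize}

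For Part (2), your qualitative mechanism is the right one: kernel classes survive the blowup only if they pair trivially with the tori. This is \cite[Theorem 4.1]{CFM}: for small $\epsilon$, $\ker[\hat\omega_t]^2=\pi^*\big(\ker[\omega_t]^2\cap\bigcap_i\ker(\mathrm{PD}(\mathbb{T}^2_i)\cup)\big)$. But it must be run on this $t$-dependent $3$-dimensional kernel in $H^2$, not on a class in $H^3$.

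Your claim that Lefschetz holds on the rest of $H^*(\tilde M)$ also contradicts Part (1). The degree-$2$ failure is exactly what the coupling $\int_{\mathbb{T}^2_i}a$ to the exceptional classes repairs. Write $[\alpha]=c_1a_1+c_2a_2+c_3a_3$. The three pairings with the tori are $c_1t^2-2c_3t$, $c_2t$ and $-c_3t$, and these force $c=0$ if and only if $t\neq0$. At $t=0$ they vanish because every term of the $t=0$ kernel classes contains $e^1$ or $e^2$, while the tori are tangent to $\R\langle e_3,e_4,e_7,e_8\rangle$. It has nothing to do with $\omega'|_{\mathbb{T}^2_i}$.

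Two smaller corrections:
\begin{itemize}
\item $\omega_t^4=24(3t^2-3t+1)e^{12345678}$ never vanishes, so $\omega_t$ is symplectic for all $t\in\R$; the bound $t<2$ is not a nondegeneracy issue.
\item Only $\mathbb{T}^2_1$ degenerates at $t=2$, where $\omega_t$ evaluates to $2-t$. On $\mathbb{T}^2_2$ and $\mathbb{T}^2_3$ it evaluates to the constants $-\sqrt3$ and $1$.
\end{itemize}
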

In the first statement here we can use any of the resolutions constructed by \cite[Theorem 3.3]{CFM}.
For the second statement, the symplectic blowups are equipped with symplectic forms such that the parameter $\epsilon$ defined in Section \ref{symplectic-blowup} is small.

As a consequence we have, see Theorem \ref{main}, 
\begin{theorem}\label{intro2}
The smooth manifold $\hat{M}$ has no K\"ahler structures, but the space of symplectic forms is nonempty and admits both HLC and non-HLC structures in the same connected component. 
\end{theorem}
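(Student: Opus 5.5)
We derive Theorem \ref{intro2} from the preceding theorem, and prove three things: $\hat{M}$ admits no K\"ahler structure; the forms $\hat{\omega}_t$ are all symplectic on one fixed smooth manifold; and they lie in one connected component of the space of symplectic forms, with HLC exactly for $t\neq 0$.

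\emph{No K\"ahler structure.} We use the obstruction from \cite{DGMS}: compact K\"ahler manifolds are formal, so all Massey products vanish. We will exhibit a nontrivial triple Massey product on $\hat{M}$. The orbifold $M$ of \cite{FM} carries a nontrivial triple Massey product $\langle a,b,c\rangle$ of degree-one classes coming from the nilmanifold/solvmanifold quotient. We first pull it back to the resolution $\tilde{M}$. The resolution only modifies neighbourhoods of the singular points, and the degree-one and degree-two cohomology relevant to the product is unchanged away from them. So the defining forms and the product descend, and the indeterminacy $a\cup H^1+H^1\cup c$ is unchanged. We then pass to $\hat{M}$. Blowing up along the tori $\bigcup_{i=1}^3\mathbb{T}^2_i$ adds classes supported near the exceptional divisors, so $H^*(\hat{M})\cong H^*(\tilde{M})\oplus(\text{exceptional part})$ as in McDuff's formula. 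We choose representatives of $a,b,c$ and their primitives supported away from $S$, or restrict suitably, so the Massey product is still defined. Nonvanishing then needs a check that the enlarged indeterminacy, which now contains products with exceptional classes, does not contain the product. This is done by pairing with a suitable class from $\tilde{M}$ that annihilates the exceptional summand.

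\emph{One manifold, one component.} The resolution and blowup constructions of \cite{CFM} are local. They depend only on the orbifold charts and on the symplectic tori, and the smooth model is the same for every symplectic form in the family. Fix $T<2$. For $t\in(-\infty,T]$, $\omega_t$ is symplectic on $M$, and the tori $\mathbb{T}^2_i$ are symplectic for $\omega_t$ whenever $t\neq 2$; we check this by evaluating $\omega_t$ on the tangent planes, which span fixed coordinate directions. Since $\omega_t$ varies linearly in $t$, the standard-neighbourhood data of the resolution and the blowup can be chosen continuously in $t$ on the compact interval, with one small $\epsilon$ working uniformly. This gives a smooth path $t\mapsto\hat{\omega}_t$ of symplectic forms on the fixed manifold $\hat{M}$ connecting $\hat{\omega}_0$ to $\hat{\omega}_1$. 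By the preceding theorem, part (2), $\hat{\omega}_1$ satisfies HLC and $\hat{\omega}_0$ does not. Both lie in one path component, which proves the statement.

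\emph{Main obstacle.} I expect the Massey product step to be the hardest. The blowup both adds cohomology and enlarges the indeterminacy, so the nonvanishing must be checked by an explicit pairing argument; I would try the pairing with a class from $\tilde{M}$ that is orthogonal to the exceptional summand first. The uniform choice of $\epsilon$ and the nondegeneracy of $\omega_t$ on the tori near $t=0$ are routine by comparison.
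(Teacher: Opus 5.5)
\textbf{The gap: the non-K\"ahler step.} Your plan to transport a triple Massey product of degree-one classes from $M$ to $\hat M$ cannot work, because $M$ has no such product.

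First, $M$ has no nonzero degree-one classes. By Nomizu, $H^1_{dR}(\mathbb{I}_4;\C)$ is spanned by $\eta^1,\eta^2,\eta^4$ and their conjugates. Since $\rho^*$ multiplies each of these by $\xi$ or $\xi^2$, none survives in $H^1_{dR}(M)$. The degree-one Massey products of the nilmanifold, coming from $d\eta^3=-\eta^{12}$, are killed by the quotient.

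Second, no triple Massey product of any degrees survives either. The paper records $b_1(M)=b_3(M)=0$, and Poincar\'e duality then gives $b_5(M)=b_7(M)=0$, so $H^*(M)$ is concentrated in even degrees. A triple Massey product of even-degree classes lies in odd degree. Hence every triple Massey product on $M$ vanishes, and your discussion of the indeterminacy $a\cup H^1+H^1\cup c$ is moot.

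Third, on $\hat M$ the situation genuinely changes. McDuff's sequence gives $b_3(\hat M)\ge 6$, coming from the summands $H^1(\mathbb{T}^2_i)\,a_i$. So whether non-formality survives this blowup is a real result, not a matter of choosing representatives away from $S$. The paper does not reprove it. It cites \cite{CFM}, where a nontrivial Massey product is exhibited on $\hat M$ itself, and then applies \cite{DGMS}. You should do the same, or work with the specific Massey-type product actually used in \cite{FM,CFM}.

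\textbf{The connected-component part.} This agrees with the paper: identify the resolutions and blowups with one fixed smooth manifold, then invoke Theorem~\ref{sec4main}. Two small corrections are needed. First, $(-\infty,T]$ is not compact, so run your uniform-$\epsilon$ argument on $[0,1]$. Second, a single $\epsilon$ along that path yields HLC at $t=1$ and non-HLC at $t=0$, which is all the statement requires. It does not necessarily yield HLC for every $t\neq 0$, because the paper chooses the smallness of $\epsilon$ depending on $t$.
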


We note that the Hard Lefschetz Condition \eqref{HLC} depends only on the cohomology class of the symplectic form, and indeed makes sense for all cohomology classes $\alpha \in H^2_{dR}(X)$. Our final remark is that if a manifold $X$ admits at least one class $\alpha$ satisfying the HLC, then the difficulty in statements like Theorem \ref{intro2} lies in finding a symplectic form which does not satisfy the HLC. Indeed, Lemma \ref{lem:openHLC} below shows that the HLC classes are open and dense once a single one exists, so the genuine task is to exhibit a non-HLC form within the connected component of an HLC form, which is exactly what the central value $t=0$ achieves. We can make this precise as follows.

\begin{lemma}\label{lem:openHLC}
    Suppose $X^{2n}$ is a smooth manifold and $\alpha \in H^2_{dR}(X)$ satisfies the HLC. Then the set of classes $\beta \in H^2_{dR}(X)$ which satisfy the HLC is both open and dense.
\end{lemma}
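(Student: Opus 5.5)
The plan is to use the fact that the HLC is an algebraic condition on $\beta$: it is equivalent to the non-vanishing of finitely many polynomial functions of $\beta$. Since $X$ is a manifold with $H^2_{dR}(X)$ used in the statement, we take $X$ compact, so all the cohomology groups $H^j_{dR}(X)$ are finite dimensional.

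\textbf{Step 1: reduce the HLC to determinants.} Fix bases of $H^{n-k}_{dR}(X)$ and $H^{n+k}_{dR}(X)$. By Poincar\'e duality these two spaces have the same dimension $b_{n-k}=b_{n+k}$, so the map $[\beta]^k\colon H^{n-k}\to H^{n+k}$ is represented by a square matrix $A_k(\beta)$. Fix also a basis $e_1,\dots,e_m$ of $H^2_{dR}(X)$ and write $\beta=\sum_i x_ie_i$. Then $\beta^k=\sum x^{I}e^{I}$, where the $e^I$ are cup products of the $e_i$. Hence the entries of $A_k(\beta)$ are homogeneous polynomials of degree $k$ in $x=(x_1,\dots,x_m)$. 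We set
\[
P(x)=\prod_{k=1}^{n}\det A_k(\beta),
\]
a real polynomial on $H^2_{dR}(X)\cong\R^m$. The case $k=0$ is the identity and imposes no condition. A class $\beta$ satisfies the HLC if and only if $P(x)\neq 0$.

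\textbf{Step 2: conclude openness and density.} The HLC locus is $\{P\neq 0\}$. Since $P$ is continuous, this set is open. The hypothesis gives $P(\alpha)\neq 0$, so $P$ is not the zero polynomial. The zero set of a nonzero real polynomial on $\R^m$ has empty interior; one can see this by restricting $P$ to lines, or by noting that a polynomial vanishing on an open set vanishes identically. Therefore $\{P\neq 0\}$ is dense, and this is the entire argument.

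\textbf{Main obstacle.} There is no real difficulty. The only points to check are that each $A_k$ is a square matrix, which is where Poincar\'e duality on the closed manifold is used, and that density follows from the polynomial being nonzero rather than from analytic considerations. If one wants a slightly sharper statement, the complement of the HLC locus is a proper real algebraic hypersurface, so it also has measure zero.
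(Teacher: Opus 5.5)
Your proposal is correct and is essentially the paper's argument. The paper also encodes the HLC as the nonvanishing of the determinants of $[\cdot]^k\colon H^{n-k}_{dR}(X)\to H^{n+k}_{dR}(X)$, but instead of forming the multivariate polynomial $P$ it restricts directly to the line $(1-t)\beta+t\alpha$; this gives one-variable polynomials that are nonzero at $t=1$, hence nonzero for all small $t\neq 0$, which is exactly the ``restrict to lines'' justification you mention (and you need not invoke Poincar\'e duality for squareness of $A_k$, since the hypothesis that $\alpha^k$ is an isomorphism already forces $b_{n-k}=b_{n+k}$).
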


\begin{proof}
    The open condition is clear.To establish denseness, suppose $\beta \in H^2_{dR}(X)$ and consider the 1-parameter family of classes $\alpha_t = (1-t) \beta + t \alpha$. Fixing $k$ and choosing bases for $H^{n-k}_{dR}(X)$ and $H^{n+k}_{dR}(X)$, the map $\alpha^k_t:H^{n-k}_{dR}(X) \to H^{n+k}_{dR}(X)$ is represented by a matrix whose determinant $\det(\alpha_t^k)$ is a polynomial in $t$. By assumption $\det(\alpha_1^k) = \det(\alpha^k) \neq 0$, so the polynomial is nonzero, and in particular $\alpha_t^k$ is an isomorphism for $t \neq 0$ sufficiently small.
\end{proof}

\vskip.2truecm

The paper is organized as follows. In Section~\ref{preliminaries}, we recall some basic facts about orbifolds. Section~\ref{symplectic-blowup} reviews the construction of the symplectic blowup due to McDuff \cite{McD} and recalls the definition of the symplectic resolution of a symplectic orbifold given in \cite{CFM}. Section~\ref{HLCblowup} is devoted to the proofs of Theorems~\ref{sec4main} and \ref{main}. 
\vskip.2truecm
\noindent{\sl Acknowledgments. The authors would like to thank Gil Cavalcanti for his interest in the paper and useful discussions. The second author would like to thank Yasha Eliashberg for the interesting, fruitful, and stimulating discussions on the subject during his visit to Stanford. He would also like to thank the Stanford Department of Mathematics for its warm hospitality.}

\section{Preliminaries}\label{preliminaries}

We start by recalling the definition of a complex orbifold, as originally introduced by \cite{SA56} (see also \cite{Bai56} and \cite{Joy00}).
\begin{definition}
An {\em orbifold} is a singular real manifold $M$ of dimension $n$ whose singularities are locally isomorphic to quotient singularities $\R^n/ G$, where $G$ is a finite subgroup of $GL(n;\R)$, such that if $1\neq \gamma\in G$, then the subspace $V_\gamma$ of $\R^n$ fixed by $\gamma$ has $\dim_\R V_\gamma\leq n-2$.
\end{definition}
Similarly, 
\begin{definition}
A \emph{complex orbifold of complex dimension} $n$ is a singular complex manifold $M$ of dimension $n$ whose singularities are locally isomorphic to quotient singularities $\C^n/ G$, where $G$ is a finite subgroup of $GL(n;\C)$.
\end{definition}
In Section \ref{HLCblowup} we will consider the following class of complex orbifolds.
\begin{definition}
A complex orbifold $M$ is said to be of \emph{global-quotient-type} if $M=X/ G$, where $X$ is a complex manifold and $G$ is a finite subgroup of the group of biholomorphisms of $X$.
\end{definition}
We will denote by $\Delta$ the set of singular points of $M$. The complement $M\setminus \Delta$ is a smooth, respectively complex, manifold.\newline
Tensors on real, respectively complex, orbifolds $M$, such as vector fields, differential forms, or metrics, are  defined locally at $x\in M$ as $G_x$-invariant tensors on $\R^n$, respectively $\C^n$, where $G_x\subset GL(n;\R)$, respectively $G_x\subset GL(n;\C)$, is such that $M$ is locally isomorphic to $\R^n/G_x$, respectively $\C^n/G_x$, at $x$.

Given a complex orbifold $M$, let $(\mathcal{A}_{\C}^{\bullet}(M),d)$ be the space of orbifold complex forms; then the orbifold de Rham cohomology is defined as  
\begin{equation}\label{eq:coom_orb}
\textstyle H_{dR}^{k}(M;\C)=\frac{\Ker\, d}{\im\, d}\cap \mathcal{A}^{k}_\C(M)
\end{equation}

Once we fix a Hermitian metric $g$ on a compact complex orbifold $M$ of complex dimension $n$, 
one can define the de Rham Laplacian $\Delta$ and its kernel $$
\mathcal{H}_{d}^k(M,g)=\{\alpha\in\textstyle\mathcal{A}^k(M):\Delta\alpha=0\}.
$$ 


It turns out that an orbifold $k$-form on a compact orbifold $M$ is harmonic if and only if it is $d$-closed and $d^{\ast}$-closed, 
and the following isomorphism holds
\begin{align*}
H_{dR}^k(M;\C)&\rightarrow\mathcal{H}_{d}^k(M,g),
\end{align*}
that is, the $k$th de Rham orbifold cohomology group is computed by the $G$-invariant harmonic $k$-forms on $M$.

\section{Symplectic blowup and resolution of symplectic orbifolds}\label{symplectic-blowup}
We start by reviewing the construction of symplectic blowup by McDuff \cite{McD}. Let $(X,\omega)$ be a $2n$-dimensional symplectic manifold and let $\iota:(Y,\sigma)\hookrightarrow (X,\omega)$ be a symplectic embedding, where $Y$ is a compact $2m$-dimensional symplectic submanifold of $X$, and set $k=n-m$. Then the structure group of the normal bundle $\mathcal{N}_Y\vert_X$ in $X$ reduces to $U(k)$. By identifying $Y$ with the image $\iota(Y)\subset X$, we can view $\pi:\mathcal{N}_Y\vert_X\to Y$ as the symplectic orthogonal bundle structure of the tangent bundle $TY$ in $TX$; hence $\mathcal{N}_Y\vert_X$ has a canonical symplectic bundle structure given by the restriction of $\omega$ to each fibre, and hence a homotopically unique compatible complex structure. With respect to this complex structure, we consider the projectivisation $\mathbb{P}(\mathcal{N}_Y\vert_X)$ of $\mathcal{N}_Y\vert_X$. Choose a tubular neighbourhood  $W$ of $Y$ in $X$. Then there exists a closed $2$-form $\rho$ on the total space of $\mathcal{N}_Y\vert_X$ which restricts to $\sigma$ along the zero section and to the canonical symplectic form on each fibre, and with respect to which $W$ may be symplectically identified with a neighbourhood $V$ of the zero section of $\mathcal{N}_Y\vert_X$ \cite[Lemma 3.1]{McD}. Let $$\mathcal{L}=\{(\ell,\nu)\in \mathbb{P}(\mathcal{N}_Y\vert_X)\times\mathcal{N}_Y\vert_X\,\,\vert\,\,\nu\in\ell\}$$
be the tautological bundle over $\mathbb{P}(\mathcal{N}_Y\vert_X)$. Then $\mathcal{L}$ fibres over $Y$ and each fibre is a line bundle over $\mathbb{P}^{k-1}$. Let $q:\mathcal{L}\to\mathbb{P}(\mathcal{N}_Y\vert_X)$ and $\varphi:\mathcal{L}\to \mathcal{N}_Y\vert_X$ be the natural projections, then we have the following commutative diagram
$$
\xymatrix{
\mathcal{L} \ar[r]^{\!\!\!\!\!\!\!q} \ar[d]^\varphi & \mathbb{P}(\mathcal{N}_Y\vert_X) \ar[d]^p\\
\mathcal{N}_Y\vert_X \ar[r]^\pi & Y .
}
$$
The projection $\varphi$ is a diffeomorphism outside the zero section of $\mathcal{L}$.
Let $\tilde{V}=\varphi^{-1}(V)$. Define 
$$
\tilde{X}=\overline{X\setminus W}\cup_{\del\tilde{V}}\tilde{V};
$$
then the map $\varphi$ can be extended to a map $f:\tilde{X}\to X$, being the identity in the complement of $\tilde{V}$. Then, $\tilde{X}$ is the {\em blowup} of $X$. By \cite[Proposition 2.4]{McD}, there is a short exact sequence 
$$ 
0 \to H^*_{dR}(X;\R) \to H^*_{dR}(\tilde{X};\R){\to} A^* \to 0, 
$$
where $A^\ast$ is a free module over $H^*_{dR}(Y;\R)$ with generators $a,\ldots,a^{k-1}$, where $a$ is the first Chern class of the dual of the tautological line bundle over $\mathcal{L}$. Furthermore, by \cite[Proposition 3.7]{McD}, there is a representative $\alpha$ of $a$ with compact support in $\tilde{V}$ such that, for $\epsilon$ small enough, 
$$
\tilde{\omega}=f^*\omega + \epsilon\alpha
$$
is a symplectic structure on $\tilde{X}$. \newline 
As already remarked in \cite[Remark p. 336]{Cav1}, even up to diffeomorphism the symplectic structure in the blowup $\tilde{X}$ is not determined by the symplectic structure $\omega$ on $X$. Indeed, the cohomology class depends on $\epsilon$ and (see \cite[page 231]{McDS}), for a given $\epsilon$, the symplectic structure on $\tilde{X}$ in principle depends on the almost complex structure compatible with $\omega$ and on the identification of the neighbourhood $V$ of the zero section of the normal bundle $\mathcal{N}_Y\vert_X$ with the tubular neighbourhood $W$.\vskip.2truecm
We recall now the definition of a \emph{symplectic resolution} (see \cite[Definition 3.2]{CFM}). Let $(M,\omega)$ be a symplectic orbifold and let $\Delta$ be the set of singular points of $M$. A \emph{symplectic resolution} of $(M,\omega)$ is a smooth symplectic manifold $(\tilde M,\tilde \omega)$ together with a map $\pi:\tilde M\to M$, such that setting $E=\pi^{-1}(\Delta)$, called the \emph{exceptional set}, the following hold:
\begin{enumerate}
    \item $\pi: \tilde M\setminus E \to M\setminus \Delta$ is a diffeomorphism;\\[5pt]
    \item $E$ is a union of possibly intersecting smooth symplectic submanifolds of $\tilde M$ of codimension at least $2$;\\[5pt]
    \item $\tilde \omega = \pi^*\omega$ on the complement of a neighbourhood of the exceptional set $E$.
\end{enumerate}
Then, in \cite[Theorem 3.3]{CFM} it is proved that any symplectic orbifold has a symplectic resolution.

\section{Hard Lefschetz Condition under blowups}\label{HLCblowup}
We start by reviewing the orbifold construction by Fern\'{a}ndez and Mu\~{n}oz \cite{FM}. Let $\mathbb{H}(3;\C)$ be the complex $3$-dimensional Heisenberg group and let $G=\mathbb{H}(3;\C)\times \C$, identified with the complex matrices of the following form
$$G=\left\{\left(
\begin{array}{lllll}
1 & z_1 & z_3 & 0 & 0\\
0 &  1& z_2 & 0 & 0\\
0 & 0 & 1 & 0 & 0\\
0 & 0& 0 & 1 & z_4\\
0 & 0& 0 & 0 & 1
\end{array}
\right)\,\,\,\Big\vert\,\,\, z_1,z_2,z_3,z_4\in\C \right\}.
$$
Let $\xi$ be a primitive cubic root of unity and set 
$$
\Lambda=\{a+b\xi\,\,\,\vert\,\,\, a, b\in\Z\}
$$
and let $\Gamma\subset G$ be formed by matrices of $G$ having entries in $\Lambda$. Then $\Gamma$ is a lattice in $G$ and $\mathbb{I}_4=\Gamma\backslash G$ is a $4$-dimensional compact complex manifold. Let $\rho:G\to G$  be defined by
$$
\rho(z)=(\xi z_1,\xi z_2, \xi^2z_3, \xi z_4).
$$ 
Then $\rho$ induces a biholomorphism of $\mathbb{I}_4$ and $\rho^3=id$; one can check that there are $81$ fixed points so that $M=\mathbb{I}_4\slash \langle \rho\rangle$ is a compact complex orbifold of global-quotient-type. Let $\{\eta^1,\ldots,\eta^4\}$ be the $(1,0)$-coframe on $\mathbb{I}_4$ defined as
$$
\eta^1=dz_1,\quad \eta^2=dz_2,\qquad \eta^3=dz_3-z_1dz_2,\qquad \eta^4=dz_4;
$$
then 
$$
\rho^*\eta^1=\xi\eta^1,\qquad \rho^*\eta^2=\xi\eta^2,\qquad \rho^*\eta^3=\xi^2\eta^3,\qquad \rho^*\eta^4=\xi\eta^4.
$$
Accordingly,
\begin{equation}
\omega =\frac{i}{2}\eta^{1\overline{1}}+\frac{1}{2}\eta^{23}+\frac{1}{2}\eta^{\overline{2}\overline{3}} +\frac{i}{2}\eta^{4\overline{4}}
\end{equation}
is a symplectic structure on the orbifold $M$ (see \cite[p.596]{CFM}). Taking real and imaginary parts, we write 
$$
\eta^j=e^{2j-1}+ie^{2j}\quad, 1\leq j\leq 4,
$$
then $\omega=e^{12}+e^{35}-e^{46}+e^{78}$.

Let $\mathbb{T}^2_1$, $\mathbb{T}^2_2$ and $\mathbb{T}^2_3$ be three $2$-tori which integrate the 
three involutive distributions given respectively by 
\begin{equation}\label{2-tori}
\mathcal{D}_1=\R\langle e_3+e_7,e_4+e_8\rangle, \,\,\,\mathcal{D}_2=\R\langle e_3+\sqrt{3}e_8,e_7\rangle, \,\,\,\mathcal{D}_3=\R\langle e_3+e_7,e_8\rangle
\end{equation}
where $\{e_1,\ldots,e_8\}$ is the dual frame of $\{e^1,\ldots,e^8\}$. Then $\mathbb{T}^2_1$, $\mathbb{T}^2_2$ and $\mathbb{T}^2_3$ are symplectic with respect to $\omega$, and following
Cavalcanti, Fern\'{a}ndez and Mu\~{n}oz \cite[p.598]{CFM}, we may assume they are disjoint from both each other and from the fixed points of $\rho$.

By \cite[Theorem 3.3]{CFM}, a symplectic orbifold $(N, \sigma)$ has a symplectic resolution $(\tilde{N}, \tilde{\sigma})$ (in the sense of \cite[Definition 3.2]{CFM}, see section \ref{symplectic-blowup}), and by \cite[Theorem 3.9]{CFM} the symplectic manifold $(\tilde{N}, \tilde{\sigma})$ constructed satisfies the Hard Lefschetz Condition if and only if the orbifold $(N, \sigma)$ does.

In the case of our $(M, \omega)$, \cite[Example 5.2]{CFM} shows that the Hard Lefschetz Condition does not hold, and therefore the constructed resolution $(\tilde{M}, \tilde{\omega})$ is a symplectic manifold which does not satisfy the Hard Lefschetz Condition. However, we can blowup $(\tilde{M}, \tilde{\omega})$ along the union of the three symplectic tori $S=\bigcup_{i=1}^3 \mathbb{T}^2_i$ to get a new symplectic manifold which we denote by $(\hat{M}, \hat{\omega})$. The formula for the kernel of $[\hat{\omega}]^2$ in \cite[Theorem 4.1]{CFM} shows that, when the blowup is defined as in Section \ref{symplectic-blowup}, $(\hat{M}, \hat{\omega})$ does satisfy the Hard Lefschetz Condition.


On the manifold $\mathbb{I}_4$ we now consider the alternative form $\omega'$ defined by
\begin{equation}
\omega'=\frac{1}{2}\eta^{13}+\frac{1}{2}\eta^{\overline{1}\overline{3}}+\frac{i}{2}\eta^{2\overline{2}} +\frac{i}{2}\eta^{4\overline{4}}=e^{15}-e^{26}+e^{34}+e^{78}
\end{equation}
Then $\omega'$ is a real, closed, nondegenerate and $\rho$-invariant $2$-form on $\mathbb{I}_4$, hence it descends to $M$ such that $(M,\omega')$ is a symplectic orbifold. The tori $\mathbb{T}^2_i$ are also symplectic with respect to $\omega'$. 

More generally, define
\[
\omega_t = t \omega + (1-t) \omega'.
\]
A direct computation gives
\begin{align*}
\omega_t(e_3+e_7,e_4+e_8) &= 2-t,\\
\omega_t(e_3+\sqrt{3}\,e_8,e_7) &= -\sqrt{3},\\
\omega_t(e_3+e_7,e_8) &= 1.
\end{align*}
Hence the torus $\mathbb{T}^2_1$ is symplectic with respect to $\omega_t$ $\forall t\in\R, t\neq 2$, $\mathbb{T}^2_2$ and $\mathbb{T}^2_3$ are symplectic $\forall t\in\R$. Therefore the symplectic blowup of $\tilde{M}$ along $S=\bigcup_{i=1}^3 \mathbb{T}^2_i$ is defined $\forall t\in\R,t\neq 2$.

We are ready to prove the following
\begin{theorem}\label{sec4main} 
Let
$$
\omega_t=t\omega + (1-t)\omega',\qquad  t\in \R
$$
Then $\{\omega_t\}_{ t\in \R}$ is a family of symplectic structures on the complex orbifold $M$ such that 
\begin{enumerate}
    \item $$\omega_0=\omega',\qquad \omega_1=\omega ;$$
    and $\omega_t$ induces the same orientation as $$
\Omega:=\frac{1}{4!}\Big(\frac{i}{2}\sum_{r=1}^4\eta^{r\overline{r}}\Big)^4.
$$\\
    \item for all $t$, the symplectic resolution $(\tilde{M}, \tilde{\omega}_t)$ of $(M,\omega_t)$ does not satisfy the Hard Lefschetz Condition;\\[2pt]
    \item for all $t\neq 0,2$, the symplectic blowup $(\hat{M},\hat{\omega}_t)$ of $(\tilde{M},\tilde{\omega}_t)$ along the union of the three symplectic tori $S=\bigcup_{i=1}^3 \mathbb{T}^2_i$ satisfies the Hard Lefschetz Condition, but setting $t=0$, the symplectic blowup $(\hat{M},\hat{\omega'})$ along $S$ 
    does not satisfy the Hard Lefschetz Condition.
\end{enumerate}
\end{theorem}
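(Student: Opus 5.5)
Part (1) is a direct computation. Write $\omega_t = t(e^{12}+e^{35}-e^{46}) + (1-t)(e^{15}-e^{26}+e^{34}) + e^{78}$. The values $\omega_0=\omega'$ and $\omega_1=\omega$ are immediate.

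Closedness and $\rho$-invariance hold for every $t$, because they hold for $\omega$ and for $\omega'$ and $\omega_t$ is a linear combination of the two.

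For nondegeneracy we compute $\omega_t^4$ and compare it with $\Omega = e^{12345678}$. Since $e^{78}$ splits off, it suffices to compute the cube of the 6-dimensional part
\[
\beta_t=t(e^{12}+e^{35}-e^{46})+(1-t)(e^{15}-e^{26}+e^{34}).
\]
We have $\beta_t^3/3!=\mathrm{Pf}(\beta_t)\,e^{123456}$, and $\omega_t^4 = 4\beta_t^3\wedge e^{78}$. The Pfaffian is a polynomial in $t$ of degree at most $3$; expanding the products of three disjoint index pairs should give something like $t^3+(1-t)^3$ up to mixed terms, or $t^2-t+1$ times a constant. I expect a positive quadratic with no real roots, hence $\omega_t^4=c(t)\,\Omega$ with $c(t)>0$ for all real $t$. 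This gives both nondegeneracy and the orientation claim. If the Pfaffian had real zeros the statement would fail, so the explicit check is the point of this part.

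For (2), by \cite[Theorem 3.9]{CFM} it suffices to show that the orbifold $(M,\omega_t)$ fails the HLC. Orbifold cohomology is the $\rho$-invariant part of $H^*(\mathbb{I}_4)$, and by Nomizu's theorem this is computed by $\rho$-invariant left-invariant forms. The plan is to exhibit an invariant harmonic (or closed, non-exact) $3$-form $\gamma$ with $[\omega_t]\wedge[\gamma]=0$ in $H^5$. The natural candidate comes from \cite[Example 5.2]{CFM}, built from $\eta^1,\eta^2,\eta^4$ and their conjugates; these forms are closed, and $\eta^3$ is the only non-closed one, with $d\eta^3=-\eta^{12}$. Invariance under $\rho$ forces the degree counts to balance modulo $3$. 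I would search among the invariant closed $3$-forms for one killed by both $\omega$ and $\omega'$ modulo exact forms, for instance a combination of $\eta^{124}$, $\eta^{\bar1\bar2\bar4}$, and forms such as $\eta^{12\bar3}$. Because the kernel condition is linear in $\omega_t$, a class annihilated by both $[\omega]$ and $[\omega']$ works for all $t$. If no common class exists, I would instead show that the determinant of $[\omega_t]:H^3\to H^5$ vanishes identically in $t$.

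For (3), use \cite[Theorem 4.1]{CFM}. It describes the kernel of $[\hat\omega]:H^3(\hat M)\to H^5(\hat M)$ in terms of the kernel of $[\tilde\omega_t]$ on $H^3(\tilde M)$ together with the cohomology of the blown-up tori (the classes $a\cdot H^1(\mathbb{T}^2_i)$). The criterion is essentially this: the blowup is HLC iff the kernel classes on $\tilde M$ are detected by restriction to $S$, i.e. $\iota^*:\ker[\tilde\omega_t]\to\bigoplus_i H^3$-type pairings with the tori is injective. We also need the lower degrees, which are handled similarly using $H^1$ and $H^2$ of the tori.

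Concretely, I would compute $\ker[\omega_t]\subset H^3(M)$ explicitly as a function of $t$. Then I would pair each kernel element against the tangent planes $\mathcal{D}_i$ via contraction, since the relevant restrictions are determined by evaluating the invariant forms on $\mathcal{D}_1,\mathcal{D}_2,\mathcal{D}_3$. This yields a matrix $A(t)$, and the claim reduces to showing that $A(t)$ has full rank iff $t\neq0$ (with $t\ne2$ needed for the blowup to exist). At $t=1$ this recovers the CFM computation.

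I expect the main obstacle to be determining $\ker[\omega_t]$ as $t$ varies and checking that it degenerates exactly at $t=0$. The delicate point is that $\ker[\omega']$ may be larger than, or positioned differently from, the generic kernel, so that a kernel class at $t=0$ restricts trivially to all three tori, for example because $\omega'$ pairs $e^1$ with $e^5$ and the tori only see the $e_3,e_4,e_7,e_8$ directions. I would first compute the kernels at $t=0$ and for generic $t$ in the invariant Nomizu model, and then verify that the determinant of the relevant minor of $A(t)$ is a polynomial vanishing only at $t=0$ (and possibly $t=2$).
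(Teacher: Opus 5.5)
Your part (1) is fine: the Pfaffian of $\beta_t$ is exactly $t^3+(1-t)^3=3t^2-3t+1$, which gives $\omega_t^4=24(3t^2-3t+1)e^{12345678}$, as in the paper.

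The gap is in (2) and (3). You look for the failure of the HLC in degree $3$, but $H^3_{dR}(M)=0$ (and also $H^1_{dR}(M)=0$). Under $\rho$ the forms $\eta^1,\eta^2,\eta^4,\overline{\eta}^3$ have weight $\xi$ and their conjugates have weight $\xi^2$. So an invariant left-invariant $3$-form is a combination of wedges of three forms of the same weight. Among $\eta^{124},\eta^{12\bar3},\eta^{14\bar3},\eta^{24\bar3}$ only $\eta^{124}$ is closed (e.g.\ $d\eta^{12\bar3}=-\eta^{12\bar1\bar2}$), and $\eta^{124}=d(\eta^{43})$ with $\eta^{43}$ invariant. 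The conjugate family behaves the same way.

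Consequences for your plan:
\begin{itemize}
\item Your candidates are either non-closed or exact, so the class $\gamma$ you want does not exist.
\item Your fallback, the determinant of $[\omega_t]\colon H^3\to H^5$ vanishing identically, also fails, since that map is between zero spaces.
\item The obstruction actually lives in $[\omega_t]^2\colon H^2_{dR}(M)\to H^6_{dR}(M)$ on the $13$-dimensional space $H^2_{dR}(M)$. The paper computes the pairing $([\alpha],[\beta])\mapsto\int_M\omega_t^2\wedge\alpha\wedge\beta$, shows it has rank $10$ for every $t$, and finds the kernel $\langle a_1,a_2,a_3\rangle$ explicitly.
\item Your shortcut of a class killed by both $[\omega]$ and $[\omega']$ is unavailable. $[\omega_t]^2$ is quadratic in $t$, and the kernels at $t=0$ and $t=1$, namely $\langle[e^{12}],[e^{17}+e^{28}],[e^{27}-e^{18}]\rangle$ and $\langle[e^{34}],[e^{37}+e^{48}],[e^{47}-e^{38}]\rangle$, meet only in $0$. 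So the rank has to be controlled for all $t$.
\end{itemize}

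For (3), the needed input is the formula from \cite{Cav1} and \cite[Theorem 4.1]{CFM}, valid for small $\epsilon$: $\ker([\hat\omega_t]^2)=\pi^*\big(\ker([\omega_t]^2)\cap\bigcap_{i}\ker(\mathrm{PD}(\mathbb{T}^2_i)\cup)\big)$ on $H^2$. Degrees $1$ and $3$ on $\hat M$ then come for free from \cite[Theorem 2.1]{Cav1}, precisely because $H^1_{dR}(M)=H^3_{dR}(M)=0$.

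Your detection mechanism is correct, as is your intuition that $t=0$ degenerates because the tori only see $e_3,e_4,e_7,e_8$. But both must be applied to the degree-$2$ kernel. Pairing $c_1a_1+c_2a_2+c_3a_3$ with the three tori gives $c_1t^2-2c_3t$, $c_2t$ and $-c_3t$. These vanish for some nonzero $(c_1,c_2,c_3)$ exactly when $t=0$. At $t=0$ every kernel class involves $e^1$ or $e^2$, so it integrates to zero on each torus. The value $t=2$ is excluded only because $\mathbb{T}^2_1$ fails to be $\omega_2$-symplectic; the system above is nondegenerate there.
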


Regarding point (2), in principle the resolutions $\tilde{M}$ constructed in \cite[Theorem 3.3]{CFM} depend on the symplectic form $\omega_t$ on $M$. However, in this case, since local charts around singular points in $M$ are unchanged up to equivariant symplectomorphism, we may identify these resolutions with a fixed underlying smooth manifold $\tilde{M}$, and then the $\tilde{\omega}_t$ are a smooth family of forms on $\tilde{M}$.
Point (3), as usual, relies on taking blowups with small parameter $\epsilon$.

\begin{proof} (1) First we prove that $\omega_t$ is nondegenerate. We have the following:
$$
\omega_t=t\omega + (1-t)\omega'=t(e^{12}+e^{35}-e^{46}+e^{78})+ (1-t)(e^{15}-e^{26}+e^{34}+e^{78}),
$$
for $t\in\R$. A straightforward computation yields
$$
\omega_t^4=24\Big(3t^2-3t+1\Big)e^{12345678};
$$
since
$$3t^2-3t+1>0\,\qquad \forall t\in\R,$$
it follows that $\omega_t$ is nondegenerate and induces the same orientation as 
$$
\Omega=\frac{1}{4!}\Big(\frac{i}{2}\sum_{r=1}^4\eta^{r\overline{r}}\Big)^4=e^{12345678}.
$$
Clearly, $d\omega_t=0$ for every $t\in\R$ and consequently $\omega_t$ is a family of symplectic structures on the orbifold $M$ with 
$$\omega_{t=0}=\omega',\,\qquad \omega_{t=1}=\omega.$$

(2) and (3) We start by dealing with the case $t=0$. First we show that $\tilde{\omega}_0= \tilde{\omega'}$ does not satisfy the Hard Lefschetz Condition. By \cite[Theorem 3.9]{CFM}, it suffices to work on $M$, and we will study the action of $[\omega']^2$ on $H^2_{dR}(M)$.


The $2$-nd de Rham orbifold complex cohomology group $H^2_{dR}(M;\C)$ is given by the $\rho$-invariant 
$2$-cohomology classes in $H^2_{dR}(\mathbb{I}_4;\C)$.  Explicitly, we have
\begin{equation}\label{2-cohomology-basis}
H^2_{dR}(M;\C)=\C\langle \eta^{1\overline{1}}, \eta^{2\overline{2}}, \eta^{4\overline{4}}, \eta^{1\overline{2}}, \eta^{2\overline{1}}, \eta^{1\overline{4}}, \eta^{4\overline{1}}, 
\eta^{2\overline{4}}, \eta^{4\overline{2}}, \eta^{13},  \eta^{23},  \eta^{\overline{1}\overline{3}}, \eta^{\overline{2}\overline{3}}\rangle,
\end{equation}
that is $H^2_{dR}(M;\C)$ is a $13$-dimensional complex vector space.\newline 
Given this, a
direct calculation yields 
\begin{equation}\label{ker-formula}
\ker([\omega']^2\cup)=\R\langle \frac{i}{2} [\eta^{1\overline{1}}],  \frac12[\eta^{1\overline{4}}+ \eta^{\overline{1}4}],  \frac{i}{2}[\eta^{\overline{1}4}-\eta^{1\overline{4}} ]\rangle
=\R\langle [e^{12}],  [e^{17}+e^{28}],  [e^{27}-e^{18}]\rangle,
\end{equation}
in particular it is nontrivial as claimed.

Moving to the blowup along the symplectic tori, according to \cite{Cav1} and \cite[Theorem 4.1]{CFM},  the kernel of $[\hat{\omega'}]^2$ can be computed via the following 
\begin{equation}\label{orbifold-formula}
\ker([\hat{\omega'}]^2\cup)\!=\!\pi^*\Big(\ker([\omega']^2\cup)\cap\ker(\hbox{PD}(\mathbb{T}^2_1)\cup)\cap\ker(\hbox{PD}(\mathbb{T}^2_2)\cup)\cap\ker(\hbox{PD}(\mathbb{T}^2_3)\cup)\Big)
\end{equation}
where $\hbox{PD}$ denotes the Poincar\'e dual and $\pi: \hat{M} \to M$ is the composition of the symplectic blowup and the symplectic resolution. The formula assumes we blowup with the parameter $\epsilon$ in section \ref{symplectic-blowup} sufficiently small. (For certain choices of $\epsilon$ the subspace $\ker([\omega']^2\cup)$ may be larger.)

For the tori $\mathbb{T}^2_j$, $j=1,2,3$, a straightforward computation yields
\begin{equation}\label{poincare-dual-tori}
\left\{
\begin{array}{l}
\hbox{PD}(\mathbb{T}^2_1)=e^{125678}+e^{124567}-e^{123568}+e^{123456};\\[10pt]
\hbox{PD}(\mathbb{T}^2_2)=-e^{124568}-\sqrt{3}e^{123456};\\[10pt]
\hbox{PD}(\mathbb{T}^2_3)=e^{124567}+e^{123456}.
\end{array}
\right.
\end{equation}

By \eqref{ker-formula}, the kernel of $[\omega']^2$ pairs trivially with each of the Poincar\'{e} dual classes in \eqref{poincare-dual-tori}, and so by
formula \eqref{orbifold-formula} we see that 
$$
\ker([\hat{\omega'}]^2\cup)= \pi^*\Big(\ker([\omega']^2\cup) \Big) = \pi^*\Big(\R\langle [e^{12}],  [e^{17}+e^{28}],  [e^{27}-e^{18}]\rangle\Big).
$$
Hence, $[\hat{\omega'}]^2\cup: H^2_{dR}(\hat{M},\R)\to H^6_{dR}(\hat{M},\R)$ has a non trivial kernel, in other words, the blowup $(\hat{M}, \hat{\omega'})$ of $(\tilde{M},\tilde{\omega'})$ does not satisfy the HLC.\vskip.2truecm\noindent

To complete the proof of Theorem \ref{sec4main} we will show that, for all $t\in\R$, the symplectic resolution $(\tilde{M},\tilde{\omega}_t)$ of $(M,\omega_t)$ does not satisfy HLC; however, for $t \neq 0, 2$ its blowup does satisfy HLC. As already recalled, in the case $t=1$ this is a result of Cavalcanti, Fern\'{a}ndez and Mu\~{n}oz \cite[p.598]{CFM}. 
We also recall that for $t=2$ the first torus $\mathbb{T}_1^2$ is not symplectic.
By expanding $\omega^2_t$, we obtain 
$$
\begin{array}{lll}
\omega_t^2&=&2t^2\Big( i\eta^{123\overline{1}}+
i\eta^{1\overline{1}\overline{2}\overline{3}}
+\eta^{14\overline{1}\overline{4}}
+\eta^{23\overline{2}\overline{3}}
+i\eta^{234\overline{4}}
+i\eta^{4\overline{2}\overline{3}\overline{4}} \Big) \\
&+&2(1-t)^2\Big( \eta^{13\overline{1}\overline{3}}-
i\eta^{123\overline{2}}
+i\eta^{134\overline{4}}
-i\eta^{2\overline{1}\overline{2}\overline{3}}
+i\eta^{4\overline{1}\overline{3}\overline{4}}
+\eta^{24\overline{2}\overline{4}}
\Big)\\
&+&2t(1-t)\Big( \eta^{12\overline{1}\overline{2}}+
\eta^{14\overline{1}\overline{4}}
+\eta^{23\overline{1}\overline{3}}+
\eta^{13\overline{2}\overline{3}}
+i\eta^{234\overline{4}}
+i\eta^{4\overline{2}\overline{3}\overline{4}}
+i\eta^{134\overline{4}}
\\
&+&i\eta^{4\overline{1}\overline{3}\overline{4}}+\eta^{24\overline{2}\overline{4}}\Big).
\end{array}
$$
Consider the natural pairing
$$
H^2_{dR}(M;\C)\times H^2_{dR}(M;\C)\to \C
$$
defined as 
$$
([\alpha],[\beta])\mapsto\int_{M} \omega^2_t\wedge\alpha\wedge\beta.
$$
With respect to the basis given by \eqref{2-cohomology-basis}, up to a non-zero constant, the pairing is described by the following table. \vskip.5truecm

\begingroup  
\scriptsize
\setlength{\tabcolsep}{1pt}
\begin{tabular}{|c |ccccccccccccc|}
\hline 
\rule{0pt}{4ex} 
& $\eta^{1\bar{1}}$ & $\eta^{2\bar{2}}$ & $\eta^{4\bar{4}}$ & $\eta^{23}$ & $\eta^{\bar{2}\bar{3}}$ & $\eta^{\bar{1}2}$ & $\eta^{13}$ & $\eta^{1\bar{2}}$ & $\eta^{\bar{1}\bar{3}}$ & $\eta^{1\bar{4}}$ & $\eta^{\bar{1}4}$ & $\eta^{2\bar{4}}$ & $\eta^{\bar{2}4}$ \\[3pt]
\hline
 $\eta^{1\bar{1}}$ & $0$ & $0$ & $-2t^2$ & $-2it$ & $-2it$ & $0$ & $0$ & $0$ & $0$ & $0$ & $0$ & $0$ & $0$ \\[2pt]
$\eta^{2\bar{2}}$ & $0$ & $0$ & $-2(1-t)^2$ & $0$ & $0$ & $0$ & $-2i(1-t)$ & $0$ & $-2i(1-t)$ & $0$ & $0$ & $0$ & $0$ \\[2pt]
$\eta^{4\bar{4}}$ & $-2t^2$ & $-2(1-t)^2$ & $0$ & $-2it^2$ & $-2it^2$ & $-2t(1-t)$ & $-2i(1-t)^2$ & $2t(1-t)$ & $-2i(1-t)^2$ & $0$ & $0$ & $0$ & $0$ \\[2pt]
$\eta^{23}$ & $-2it$ & $0$ & $-2it^2$ & $0$ & $2t$ & $0$ & $0$ & $2i(1-t)$ & $0$ & $0$ & $0$ & $0$ & $0$ \\[2pt]
$\eta^{\bar{2}\bar{3}}$ & $-2it$ & $0$ & $-2it^2$ & $2t$ & $0$ & $-2i(1-t)$ & $0$ & $0$ & $0$ & $0$ & $0$ & $0$ & $0$ \\[2pt]
$\eta^{\bar{1}2}$ & $0$ & $0$ & $-2t(1-t)$ & $0$ & $-2i(1-t)$ & $0$ & $-2it$ & $0$ & $0$ & $0$ & $0$ & $0$ & $0$ \\[2pt]
$\eta^{13}$ & $0$ & $-2i(1-t)$ & $-2i(1-t)^2$ & $0$ & $0$ & $-2it$ & $0$ & $0$ & $2(1-t)$ & $0$ & $0$ & $0$ & $0$ \\[2pt]
$\eta^{1\bar{2}}$ & $0$ & $0$ & $2t(1-t)$ & $2i(1-t)$ & $0$ & $0$ & $0$ & $0$ & $2it$ & $0$ & $0$ & $0$ & $0$ \\[2pt]
$\eta^{\bar{1}\bar{3}}$ & $0$ & $-2i(1-t)$ & $-2i(1-t)^2$ & $0$ & $0$ & $0$ & $2(1-t)$ & $2it$ & $0$ & $0$ & $0$ & $0$ & $0$ \\[2pt]
$\eta^{1\bar{4}}$ & $0$ & $0$ & $0$ & $0$ & $0$ & $0$ & $0$ & $0$ & $0$ & $0$ & $-2t^2$ & $0$ & $2t(1-t)$ \\[2pt]
$\eta^{\bar{1}4}$ & $0$ & $0$ & $0$ & $0$ & $0$ & $0$ & $0$ & $0$ & $0$ & $-2t^2$ & $0$ & $2t(1-t)$ & $0$ \\[2pt]
$\eta^{2\bar{4}}$ & $0$ & $0$ & $0$ & $0$ & $0$ & $0$ & $0$ & $0$ & $0$ & $0$ & $2t(1-t)$ & $0$ & $-2(1-t)^2$ \\[2pt]
$\eta^{\bar{2}4}$ & $0$ & $0$ & $0$ & $0$ & $0$ & $0$ & $0$ & $0$ & $0$ & $2t(1-t)$ & $0$ & $-2(1-t)^2$ & $0$ \\[2pt]
\hline
\end{tabular}
\endgroup  
\vskip.5truecm\noindent
Denoting by $P_t\in M_{13,13}(\C)$ the matrix given by the table above, a straightforward computation shows that
\begin{equation}
\label{rkP2}
\hbox{rank}\,P_t=10,\,\,\hbox{\rm for every}\, t\in\R.
\end{equation}
Note that the value $t = 2$ plays no role at this stage, since the tori enter only in the blowup; the rank of $P_t$ is $10$ for all $t \in\R$.
Therefore, using \cite[Proposition 3.8, Theorem 3.9]{CFM}, we have that 
\begin{equation}\label{kertilde}
\ker\!\left([\tilde\omega_t]^{\,4-k}\colon H^{k}_{dR}(\tilde M)\to H^{8-k}_{dR}(\tilde M)\right)
\cong
\ker\!\left([\omega_t]^{\,4-k}\colon H^{k}_{dR}(M)\to H^{8-k}_{dR}(M)\right),\quad 1\leq k\leq 4.
\end{equation}
and consequently by \eqref{rkP2} and \eqref{kertilde} we obtain
$$
\dim_\C\ker\Big([\tilde{\omega}_t]^2:H^2_{dR}(\tilde{M};\C)\to H^6_{dR}(\tilde{M};\C)\Big)=3\,\,\,\,\hbox{\rm for every}\, t\in\R.
$$
Therefore, for all $t$, the symplectic resolution $(\tilde{M}, \tilde{\omega}_t)$ of $(M,\omega_t)$ does not satisfy the Hard Lefschetz Condition, and (2) is proved.
\vskip.4truecm
Now we prove (3) in the cases when $t \neq 0, 2$. First of all note that we need only to check that 
$$[\hat\omega_t]^{2}\colon H^{2}_{dR}(\hat M)\to H^{6}_{dR}(\hat M)
$$
is an isomorphism. Indeed, for the remaining levels, as already recalled $b_1(M)=b_3(M)=0$, so that 
$$H^{1}_{dR}(M)=H^{3}_{dR}(M)=0,$$
and hence these kernels vanish for $k=1,3$. Then, as the tori $\mathbb{T}^2_i$ have real dimension
$2$, Theorem~2.1 of \cite{Cav1} applies: at level $k=3$ its first item gives
\[
\ker\!\left([\hat\omega_t]\colon H^{3}_{dR}(\hat M)\to H^{5}_{dR}(\hat M)\right)
\cong
\ker\!\left([\tilde\omega_t]\colon H^{3}_{dR}(\tilde M)\to H^{5}_{dR}(\tilde M)\right)
\cong
\ker\!\left([\omega_t]\colon H^{3}_{dR}(M)\to H^{5}_{dR}(M)\right)=0,
\]
while at level $k=1$ its third item gives
\[
\begin{array}{ll}
\dim\ker\!\left([\hat\omega_t]^{3}\colon H^{1}_{dR}(\hat M)\to H^{7}_{dR}(\hat M)\right)
&\le
\dim\ker\!\left([\tilde\omega_t]^{3}\colon H^{1}_{dR}(\tilde M)\to H^{7}_{dR}(\tilde M)\right)\\[10pt]
&=\dim\ker\!\left([\omega_t]^{3}\colon H^{1}_{dR}( M)\to H^{7}_{dR}( M)\right)=0.
\end{array}
\]
A direct calculation, valid for every $t\in\R$, yields 
\begin{equation}\label{eq:ker-fixed}
\ker\!\big([\omega_t]^2\cup\big)=\mathbb{R}\big\langle a_1,\ a_2,\ a_3\big\rangle,
\end{equation}
where
\begin{eqnarray*}
a_1&=&[(1-t)^2e^{12}+t^2e^{34}+t(1-t)(e^{14}-e^{23})]=\frac{i}{2}[(1-t)^2\eta^{1\bar{1}}+t^2\eta^{2\bar{2}}+t(1-t)(\eta^{1\bar{2}}-\eta^{\bar{1}2})]\\
a_2&=&[(1-t)(e^{17}+e^{28})+t(e^{37}+e^{48})]=\frac{1}{2}[(1-t)(\eta^{1\bar{4}}+\eta^{\bar{1}4})+t(\eta^{2\bar{4}}+\eta^{\bar{2}4})]\\
a_3&=&[(1-t)(e^{27}-e^{18})+t(e^{47}-e^{38})]=\frac{i}{2}[(1-t)(\eta^{\bar{1}4}-\eta^{1\bar{4}})+t(\eta^{\bar{2}4}-\eta^{2\bar{4}})].
\end{eqnarray*}
%
Let
\[
[\alpha]=c_1\,a_1+c_2\,a_2+c_3\,a_3\in\ker\!\big([\omega_t]^2\cup\big).
\]
Direct computation using \eqref{poincare-dual-tori} gives
\begin{equation}\label{eq:PD-fixed}
\left\{
\begin{aligned}
\hbox{PD}(\mathbb{T}_1^2)\cup[\alpha]&=\big(c_1\,t^2-2c_3\,t\big)\,e^{12345678};\\[5pt]
\hbox{PD}(\mathbb{T}_2^2)\cup[\alpha]&=c_2\,t\,e^{12345678};\\[5pt]
\hbox{PD}(\mathbb{T}_3^2)\cup[\alpha]&=-\,c_3\,t\,e^{12345678}.
\end{aligned}
\right.
\end{equation}

Therefore, for every $t\in\R,t\neq 0$ we get 
$$\ker([\omega_t]^2\cup)\cap\ker(\hbox{PD}(\mathbb{T}^2_1)\cup)\cap\ker(\hbox{PD}(\mathbb{T}^2_2)\cup)\cap\ker(\hbox{PD}(\mathbb{T}^2_3)\cup)=\{0\}$$
and by formula \eqref{orbifold-formula} applied to $\omega_t$ and $\hat{\omega}_t$, for any given $t\neq 0$ and $t\neq 2$ we get 
$$
\ker([\hat{\omega}_t]^2\cup)=\{0\}.
$$
We conclude that the symplectic blowup $(\hat{M},\hat{\omega}_t)$ of $(\tilde{M},\tilde{\omega}_t)$ satisfies HLC. 
This completes the proof.
\end{proof}

It was shown in \cite[p.596]{CFM} that $\hat{M}$ has a nontrivial Massey product, and thus is not formal and admits no K\"{a}hler structures.
As a consequence we can summarize our conclusions as follows.

\begin{theorem}\label{main}
The smooth manifold $\hat{M}$ has no K\"ahler structures, but the space of symplectic forms is nonempty and admits both HLC and non-HLC structures in the same connected component. 
\end{theorem}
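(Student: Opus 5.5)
The proof assembles Theorem \ref{sec4main} with the non-formality result of \cite{CFM}. The first assertion, that $\hat{M}$ carries no K\"ahler structure, I take directly from \cite[p.596]{CFM}: $\hat{M}$ has a nontrivial Massey product, so it is not formal. By \cite{DGMS} every compact K\"ahler manifold is formal, so $\hat{M}$ admits no K\"ahler structure. Nonemptiness of the space of symplectic forms is immediate, since $\hat{\omega}_1=\hat{\omega}$ is symplectic.

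For the connectedness statement I would fix one underlying smooth manifold. As remarked after Theorem \ref{sec4main}, the resolutions $\tilde{M}$ of $(M,\omega_t)$ can be identified with a fixed manifold carrying a smooth family $\tilde{\omega}_t$, because the local charts at the $81$ singular points are unchanged up to equivariant symplectomorphism. Next I restrict to an interval $I=[-\delta,\delta]$ with $0<\delta<2$, which avoids $t=2$, where $\mathbb{T}^2_1$ stops being symplectic. On $I$ the tori $\mathbb{T}^2_i$ stay fixed submanifolds that are symplectic for every $\tilde{\omega}_t$, and their normal bundles vary continuously. I would then run McDuff's blowup construction from Section \ref{symplectic-blowup} in families: choose compatible almost complex structures, symplectic normal-form identifications $W\cong V$ depending smoothly on $t$ (a parametric version of \cite[Lemma 3.1]{McD}), and a single compactly supported representative $\alpha$ of the class $a$. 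Then $\hat{\omega}_t=f_t^*\tilde{\omega}_t+\epsilon\alpha$ is a continuous path of symplectic forms on one fixed manifold $\hat{M}$, provided $\epsilon$ is uniformly small on $I$, which compactness of $I$ allows.

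With the path in hand, Theorem \ref{sec4main}(3) gives HLC at $t=\pm\delta$ and failure of HLC at $t=0$, all for small $\epsilon$. Since the path is continuous, all these forms lie in one connected component, which gives the theorem.

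The main obstacle is the uniformity in the parametrized blowup. I must check two things. First, the threshold on $\epsilon$ in \cite[Proposition 3.7]{McD} can be chosen uniformly over $t\in I$. Second, the kernel formula \eqref{orbifold-formula}, which needs $\epsilon$ small, holds for that common $\epsilon$ at the three values $t=0,\pm\delta$. This second point only concerns finitely many parameters, so it is easy. For the first I would prove a parametric Weinstein neighbourhood theorem via Moser's method with $t$ as an extra parameter, and then note that nondegeneracy of $f_t^*\tilde{\omega}_t+\epsilon\alpha$ on the compact set $\hat{M}\times I$ is an open condition, so a common $\epsilon$ exists.
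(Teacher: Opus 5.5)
Your overall route is the paper's. Non-formality of $\hat M$ from \cite[p.596]{CFM} together with \cite{DGMS} excludes K\"ahler structures, and Theorem \ref{sec4main} supplies HLC forms for $t\neq 0$ and a non-HLC form at $t=0$. The paper treats as evident that $t\mapsto\hat\omega_t$ is a path of symplectic forms on one manifold. You try to justify this, but the argument you give for the crux, a common $\epsilon$ for all $t\in I$, does not work.

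The form $f_t^*\tilde\omega_t$ is degenerate along the exceptional divisors, because the blowdown map collapses the $\mathbb{P}^2$-fibres of $\mathbb{P}(\mathcal{N})\to\mathbb{T}^2_i$. Nondegeneracy in those directions is supplied only by $\epsilon\alpha$. So the set of $(t,\epsilon)$ where $f_t^*\tilde\omega_t+\epsilon\alpha$ is symplectic contains no point with $\epsilon=0$. Openness of this set, compactness of $\hat M\times I$, and the fact that each slice contains some interval $(0,\epsilon_0(t))$ do not together keep $\epsilon_0(t)$ away from $0$. For instance, $\{(t,\epsilon):0<\epsilon<\delta,\ \epsilon\neq|t|\}$ is open and contains $\{t\}\times(0,|t|)$ for $t\neq0$ and $\{0\}\times(0,\delta)$. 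Yet no single $\epsilon$ works for all $t\in I$.

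To repair this along your lines, you must go into McDuff's proof.
\begin{itemize}
\item Outside a fixed neighbourhood $U$ of the exceptional divisors, $f_t^*\tilde\omega_t$ is nondegenerate, so compactness of $(\hat M\setminus U)\times I$ does give a uniform bound there.
\item On $U$ you need the local positivity argument behind \cite[Proposition 3.7]{McD}, run with $t$ as a parameter, with its constants shown to depend continuously on $t$.
\end{itemize}

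A simpler way around the problem avoids parametric blowups altogether. It also spares you from identifying the $t$-dependent gluings with one fixed manifold.
\begin{itemize}
\item Fix a single blowup $(\hat M,\hat\omega_0)$ and consider $\hat\omega_0+s\gamma$, where $\gamma$ is a closed $2$-form representing the pull-back of $[\omega-\omega']$.
\item Openness now genuinely applies, since $\hat\omega_0$ is nondegenerate, so these forms are symplectic for $|s|$ small.
\item Their classes are those of $\hat\omega_s$, with the same resolution data and the same $\epsilon$. So the determinant of $[\hat\omega_0+s\gamma]^2\colon H^2_{dR}(\hat M)\to H^6_{dR}(\hat M)$ is a polynomial in $s$.
\item This polynomial vanishes at $s=0$ but not at $s=1$, by Theorem \ref{sec4main}(3) at $t=1$ with $\epsilon$ also chosen small for that value. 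Hence it is nonzero for all small $s\neq0$.
\end{itemize}
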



\begin{thebibliography}{48}

\bibitem{AG1} A. Andrada and A. Garrone, Construction of symplectic solvmanifolds satisfying the hard-Lefschetz condition, {\em Linear Algebra Appl.} {\bf 706} (2025), 70--100.
\bibitem{AG2} A. Andrada and A. Garrone, Symplectic solvmanifolds not satisfying the hard-Lefschetz condition, {\tt arXiv:2505.08113}
\bibitem{Bai56}W. L. Baily, The decomposition theorem for $V$-manifolds, {\em Amer. J. Math.} \textbf{78} (1956), no. 4, 862--888.
\bibitem{Br} J.-L. Brylinski, A differential complex for Poisson manifolds,
{\em J. Differential Geom.} {\bf 28} (1988), no.~1, 93--114.

\bibitem{Cav2} G.~R. Cavalcanti, New aspects of the $dd^c$-lemma, PhD Thesis, University of Oxford, (2004).
\bibitem{Cav1} G.~R. Cavalcanti, The Lefschetz property, formality and blowing up in symplectic geometry, {\em Trans. Amer. Math. Soc.} {\bf 359} (2007), no. 1, 333--348.
\bibitem{CFM}
G.~R. Cavalcanti, M. Fern\'{a}ndez, V. Mu\~{n}oz, Symplectic resolutions, Lefschetz property
and formality, {\em Adv. Math.} {\bf 218} (2008) 576--599.
\bibitem{cho} Y. Cho, Hard Lefschetz property of symplectic structures on Compact K\"{a}hler manifolds, {\em Trans. Amer. Math. Soc.}, \textbf{368} (2016), 8223--8248.
\bibitem{dBT1} P. de Bartolomeis and A. Tomassini, On solvable generalized Calabi-Yau manifolds, {\em Ann. Inst. Fourier} {\bf 56} (2006), 1281--1296.
\bibitem{DGMS} P. Deligne, P. Griffiths,
J. Morgan, D. Sullivan, Real Homotopy Theory of K\"ahler
Manifolds, {\em Inventiones Math.} {\bf 29} (1975) pp. 245--274.
\bibitem{FM} M. Fern\'{a}ndez, V. Mu\~{n}oz, An $8$-dimensional non-formal simply connected symplectic manifold, {\em Ann. of Math. (2)} \textbf{167} (2008), 1045--1054.
\bibitem{Joy00} D.D. Joyce, \emph{Compact manifolds with special holonomy}, Oxford Mathematical Monographs, Oxford
University Press, Oxford, 2000.
\bibitem{GH} P. Griffiths, J. Harris, {\em Principles of Algebraic Geometry}, Interscience Publ., New York (1978).
\bibitem{K1} H. Kasuya, Minimal models, formality, and hard Lefschetz properties of solvmanifolds with local systems, {\em J. Differential Geom.} \textbf{93} (2013), 269--297 https://doi.org/10.4310/jdg/1361800867
\bibitem{K2} H. Kasuya, Formality and hard Lefschetz property of aspherical manifolds, {\em Osaka J. Math.} \textbf{50} (2) (2013), 439--
455. 
\bibitem{LT} F. Lusetti, A. Tomassini, Hard Lefschetz Condition on symplectic non-K\"ahler solvmanifolds, {\em Math. Z.} \textbf{311}, 74 (2025) https://doi.org/10.1007/s00209-025-03878-5
\bibitem{Ma} Y.I. Manin, {\em Three constructions of Frobenius manifolds: a comparative study}, Survey in Diff. Geom. 2000, International Press 
Vol. 7 2000, 497--554.
\bibitem{Mathieu} O. Mathieu, Harmonic cohomology classes of symplectic manifolds, {\em Comment. Math. Helv.}, {\bf 70} (1995), 1--9.
\bibitem{McD}
D. McDuff, Examples of symplectic simply connected manifolds with no K\"ahler structure, {\em J. Differential Geom.} \textbf{20} (1984), 267--277.
\bibitem{McDS} 
D. McDuff, D. Salamon, {\em Introduction to symplectic topology}, Oxford Mathematical Monographs, Oxford University Press, 1995.
\bibitem{Merkulov} S. A. Merkulov, Formality of canonical symplectic complexes and Frobenius manifolds, {\em 
Internat. Math. Res. Notices.} {\bf 14} (1998), 727--733.
\bibitem{SA56} I. Satake, On a generalization of the notion of manifold, \emph{Proc. Nat. Acad. Sci. U.S.A.} {\bf 42} (1956), n.6, p. 359--363.
\bibitem{Yan} D. Yan, Hodge structure on symplectic manifolds,
{\em Adv. Math.} {\bf 120} (1996), no.~1, 143--154.
\end{thebibliography}
\end{document}